\documentclass[aihp]{imsart}

\RequirePackage{amsthm,amsmath,amsfonts,amssymb}
\RequirePackage[numbers]{natbib}
\RequirePackage[colorlinks,citecolor=blue,urlcolor=blue]{hyperref}
\RequirePackage{graphicx}
\RequirePackage{fncylab}
\RequirePackage{mathtools}
\RequirePackage[normalem]{ulem}
\RequirePackage{xcolor}
\RequirePackage{comment}

\startlocaldefs
\theoremstyle{plain}
\newtheorem*{themBJ}{Main Theorem}

\newtheorem*{them*}{Theorem}
\newtheorem{them}{Theorem}[section]
\newtheorem{pro}[them]{Proposition}

\newtheorem{lem}[them]{Lemma}

\theoremstyle{remark}
\newtheorem{defi}[them]{Definition}
\newtheorem{convention}[them]{Convention}
\newtheorem{notation}[them]{Notation} 
\newtheorem{definotation}[them]{Definition/Notation}

\newtheorem{reminder-notation}[them]{Reminder/Notation}
\newtheorem*{problem*}{Main Problem}
\newtheorem{rem}[them]{Remark}
\newtheorem{ex}[them]{Example}    
\newtheorem{reminder}[them]{Reminder} 
\newcounter{point}
\newcounter{souspoint}[point]
\renewcommand{\thepoint}{\alph{point}}

\labelformat{point}{(#1)}          
\labelformat{souspoint}{(#1)}  
\newcommand{\point}{\refstepcounter{point}{\bf(\thepoint)}}

\newcommand{\maz}{\setcounter{point}{0}\setcounter{souspoint}{0}}

\newcommand{\dd}{\mathrm{d}}

\newcommand{\como}{\mathfrak{Q}}
\newcommand{\mq}{\mathfrak{MQ}}

\newcommand{\disp}{\mathfrak{Disp}}

\newcommand{\Tt}{{\mathcal T}}
\newcommand{\XX}{{\mathcal X}}
\newcommand{\EE}{{\mathcal E}}
\newcommand{\A}{{\mathcal A}}

\renewcommand{\leq}{\leqslant} 
\renewcommand{\geq}{\geqslant} 
\newcommand{\eqdef}{\coloneqq}
\newcommand{\eqdefup}{\coloneqq}

\newcommand{\eps}{\varepsilon}

\newcommand{\la}{\lambda}
\renewcommand{\P}{\mathbb{P}}
\newcommand{\p}{\mathcal{P}}
\newcommand{\m}{\mathcal{M}}
\newcommand{\ma}{\mathrm{Marg}}
\newcommand{\C}{\mathcal{C}}

\newcommand{\R}{\mathbb{R}}

\newcommand{\N}{\mathbb{N}}

\newcommand{\E}{\mathbb{E}}
\newcommand{\law}{\operatorname{Law}}

\newcommand{\pr}{\operatorname{proj}}

\newcommand{\op}{\mathopen}
\newcommand{\clo}{\mathclose}

\newcommand{\bmu}{\mbox{\mathversion{bold}$\mu$}}


\definecolor{boubelcolor}{rgb}{0.03,0.50,0.05
}

\definecolor{juilletcolor}{rgb}{.65,0.05,0
}

\DeclareUnicodeCharacter{2260}{\colorlet{memoirejuillet}{.}\color{juilletcolor}} 
\DeclareUnicodeCharacter{00B1}{\color{memoirejuillet}{}} 
\DeclareUnicodeCharacter{00A3}{\colorlet{memoireboubel}{.}\color{boubelcolor}} 
\DeclareUnicodeCharacter{00A7}{\color{memoireboubel}{}} 

\endlocaldefs

\begin{document}

\begin{frontmatter}
\title{On absolutely continuous curves in the Wasserstein space over $\R$ and their representation by an optimal Markov process}
\runtitle{Wasserstein curves and Markov representation}

\begin{aug}
\author[A]{\inits{C.}\fnms{Charles} \snm{Boubel}\ead[label=e1]{charles.boubel@math.unistra.fr}},
\author[B,C]{\inits{N.}\fnms{Nicolas} \snm{Juillet}\ead[label=e2]{nicolas.juillet@uha.fr}}
\address[A]{Institut de Recherche Math\'ematique Avanc\'ee, UMR 7501, Universit\'e de Strasbourg et CNRS, 7 rue Ren\'e Descartes, 67\,000 Strasbourg, France.
\printead{e1}}

\address[B]{Université de Haute-Alsace, IRIMAS UR 7499, 68\,100 Mulhouse, France.
\printead{e2}
}

\address[C]{Université de Strasbourg, France
}
\end{aug}

\begin{abstract}
Let $\bmu=(\mu_t)_{t\in \R}$ be a 1-parameter family of probability measures on $\R$. In \cite{boujui1} we introduced its ``Markov-quantile'' process: a process $X=(X_t)_{t\in\R}$ that resembles as much as possible the quantile process attached to $\mu$, among the Markov processes attached to $\mu$, {\em i.e.}\ whose family of marginal laws is $\bmu$.

In this article we look at the case where $\bmu$ is absolutely continuous in the Wasserstein space $\p_2(\R)$. Then $X$ is solution of a dynamical transport problem with marginals $(\mu_t)_t$. It provides a {\em Markov} minimal Lagrangian probabilistic representative of $\bmu$, which is moreover unique among the processes obtained as certain types of limits: limits for the finite dimensional topology of quantile processes where the past is made independent of the future conditionally on the present at finitely many times, or limits of processes linearly interpolating $\bmu$.

This raises new questions about ways to obtain {\em Markov} Lagrangian representatives, and to seek uniqueness properties in this framework.
\end{abstract}

\begin{abstract}[language=french]
Soit $\bmu=(\mu_t)_{t\in \R}$ une famille à un paramètre de mesures de probabilité sur $\R$. Dans \cite{boujui1} nous introduisons le processus ``Markov-quantile'' qui lui est attaché: c'est le processus $X=(X_t)_{t\in\R}$ qui ressemble le plus qu'il est possible au processus quantile associé à $\bmu$, parmi les processus markoviens associés à $\bmu$, c'est-à-dire dont la famille de marges est $\bmu$.

Dans cet article nous considérons le cas où $\bmu$ est absolument continue dans l'espace de  Wasserstein $\p_2(\R)$. Alors $X$ est solution d'un problème de transport dynamique, de marges $(\mu_t)_t$. Il fournit un représentant probabiliste lagrangien minimal {\em markovien} de $\bmu$. Il est en outre  unique parmi les processus obtenus comme certains types de limites~: limites pour la topologie de dimension finie de processus quantiles dont le passé est rendu indépendant du futur, conditionnellement au présent, en un nombre fini d'instants, ou limites de processus interpolant linéairement $\bmu$.

Ceci soulève de nouvelles questions sur les manières d'obtenir des représentants lagrangiens {\em markoviens}, et de demander des propriétés d'unicité dans ce cadre.

\end{abstract}

\begin{keyword}[class=MSC]
\kwd[Primary ]{60A10}
\kwd{49Q22, 60J25}
\kwd[; secondary ]{35Q35, 28A33, 49J55}
\end{keyword}

\begin{keyword}
\kwd{Markov process}
\kwd{Lagrangian action}
\kwd{Optimal transport}
\end{keyword}


\end{frontmatter}

\section{Introduction}
In \cite{boujui1} we introduced the ``Markov-quantile'' process attached to a $1$-parameter family $\bmu=(\mu_t)_{t\in \R}$ of probability measures on $\R$. It is a process in the broad sense, {\em i.e.}\ a 1-parameter family $(X_t)_{t\in\R}$ of random variables defined on the same probability space. For the distribution of $(X_t)_{t\in\R}$ we adopted the notation $\mq((\mu_t)_{t\in \R})$, or generally simply $\mq$, that is a measure on $\R^\R$ equipped with the product $\sigma$-field. It can be called Markov-quantile measure but, by abuse of notation, we occasionally identified it with the Markov-quantile process. As usual $X_t$ may namely be chosen to be the projection on the coordinate of label $t$ for the canonical probability space $\Omega=\R^\R$ equipped with $\mq$ itself.  The Markov-quantile measure $\mq$ is characterized by the following properties:
\begin{itemize}
\item[\point\label{point:marg}] $\bmu$ is the family of its marginal laws, {\em i.e.}, for each $t$, $\mu_t$ is the law of $X_t$,
\item[\point\label{point:markov}] it is Markov,
\item[\point\label{point:ressemble}] it resembles ``as much as possible'' the quantile process $\como$ attached to $\bmu$.
\end{itemize}
The meaning of \ref{point:marg} is $\forall t\in \R, \mu_t=\mq\circ X_t^{-1}$, and that of \ref{point:markov}  is recalled in Definition \ref{defi:markovien}. Remark \ref{rem:markov} gives a practical criterion for Markov measures. The meaning of \ref{point:ressemble} is made precise in \S\ref{sec:quick}. The quantile process $\como((\mu_t)_{t\in \R})$ is the 1-parameter family $(Q_t)_{t\in\R}$ of random variables on $[0,1]$ with the Lebesgue measure, defined  by: $Q_t(\alpha)=x_{\mu_t}(\alpha)$, where $x_{\mu_t(\alpha)}$ is the quantile of $\mu_t$ of order $\alpha\in [0,1]$; see Reminder \ref{remind:quantile} for details. With this notation it is well-known that the law of  $(Q_{t_1},Q_{t_2})$ is the optimal transport plan for the quadratic cost between $\mu_{t_1}$ and $\mu_{t_2}$, as we recall in Reminder \ref{reminder:wasserstein}.

For all the details on $\mq$, we refer the reader to our initial article \cite{boujui1}, in particular its introduction and  its~\S1.5 where we give an intuition of what a Markov process that is as similar as possible to the quantile process looks like, and why it is difficult to define it. However, to avoid natural confusions some comments are in order:
\begin{itemize}
\item When the quantile process $\como$ is Markov, we have $\mq=\como$. In fact both properties are equivalent. This happens in particular for any $\bmu$ satisfying $\mu_t(x)=0$ for every $x$ and $t$ (purely non atomic measures), see \cite[Remark 1.8(a)]{boujui1}.
\item If the time index set $[0,1]$ or $\R$ is replaced by $\N$ (or a finite set $R=\{r_1,\ldots,r_n\}\subset \R$ with $r_1<\cdots<r_n$) there exists a trivial solution to our problem. The process attached to $(\mu_n)_{n\in \N}$ can be called the \emph{quantile Markov chain} and consists in the inhomogeneous Markov chain with the quantile couplings (see Reminder \ref{remind:quantile} for this notion) from $\mu_n$ to $\mu_{n+1}$ used as transition kernels, see \cite[Remark 1.16(b)]{boujui1}.
\item Our approach to define $\mq$ can be seen \emph{in very far approximation} as a ``discrete to continuous'' procedure where we use partitions $(R_n)_n$ of the time index set $[0,1]$ and the corresponding quantile Markov chains, defined as suggested in the previous point. The difficulty in \cite{boujui1} is not in extracting converging subsequences but showing that an adequate choice of the sequence of partitions enables to retain the Markov property at the limit. This last property is in fact a problem as soon as $(\mu_t)_{t\in T}$ is defined on a non discrete (but possibly still countable) $T\subset [0,1]$.
\end{itemize}

In this article, we consider $\mq$ in a more analytical context than in \cite{boujui1}, that of the dynamical optimal transport theory in duality with the continuity equation, notably in continuation with Lisini's work \cite{Lis}. We prove that $\mq$ satisfies a minimality property: the expected kinetic energy of the random curve of law $\mq$ is as small as it can be for a process that exactly interpolates $\bmu$. The novel aspect of this result is of course the Markov property. It comes with several promising questions for which we give an account later, summed up in the last section.

Now \ref{sub_preliminaires} gives  a few necessary elements for understanding and stating our Main Theorem, and \ref{sub_outline} the article's outline.

\subsection{Preliminaries and Main Theorem}\label{sub_preliminaires}

In \cite{boujui1}, we dealt with {\em any} 1-parameter family of probability measures on $\R$. In this article we consider only the —nevertheless still rich--- set of {\em continuous} curves $(\mu_t)_{t\in[0,1]}:[0,1]\to\p_2(\R)$ to the Wasserstein space over $\R$. It provides the advantage that $\como$ and $\mq$ will be identified with measures on $\mathcal{C}([0,1],\R)$ (see Notation \ref{not:mac} just below, Remark \ref{rem:concentrated} and Remark \ref{rem:sale}). The reader may already have noticed another (secondary) difference: in this article the time set is $[0,1]$.

\begin{notation}\label{not:mac} \maz
\point\ For every Polish ({\em i.e.}, complete and separable) metric space $(\mathcal{X},\mathrm{d})$ we denote by $\mathcal{C}([0,1],\mathcal{X})$ the space of continuous curves from $[0,1]$ to $\mathcal{X}$ ---or simply by $\mathcal{C}$ specially when $\mathcal{X}=\R$---, with the $\sigma$-algebra induced by the topology of \mbox{$\|\cdot\|_\infty$}. We are interested in $\p(\mathcal{C})$, that is the space of probability measures on it and we denote by $\ma_\mathcal{C}(\bmu)$ the subset $\{\Gamma\in \p(\mathcal{C}):\,\Gamma^t=\mu_t\ \text{for every }t\in [0,1]\}$ where $\Gamma^t$ is $\Gamma$ pushed forward by the map $\pr^t:\gamma\in \mathcal{C}\to \gamma(t)$. Similarly we denote by $\ma(\bmu)$ the subset of $\p\left(\mathcal{X}^{[0,1]}\right)$ defined by $\{\Gamma\in \p\left(\mathcal{X}^{[0,1]}\right):\, \pr^t_\#\Gamma=\mu_t\ \text{for every }t\in [0,1]\}$. The convergence we consider on $\p(\mathcal{C})$ is the usual weak convergence of measures used in Probability Theory, {\em i.e.,} $\Gamma_n  \to \Gamma \in \p(\mathcal{C})$ if and only if $\int f d\Gamma_n\to\int f d\Gamma$ for any bounded and continuous function $f$ defined on $\mathcal{C}$. Be cautious that the same definition, applied to the case where the measures $\Gamma_n$ and $\Gamma$ are considered in $\mathcal{X}^{[0,1]}$ endowed with the product topology is equivalent to the convergence of the finite marginals, see \cite[Reminder 1.11]{boujui1}. Both concepts can accurately be called ``weak convergence''. In the present paper to avoid confusion we call the less stringent notion of convergence ``finite dimensional convergence'' and the convergence in $\p(\C)$ ``weak convergence''.

\point\  Joint marginals of $\Gamma\in \p(\C)$ or $\Gamma \in \p\left(\mathcal{X}^{[0,1]}\right)$ on several indices are denoted by $(\pr^{r_1,\ldots,r_m})_\#\Gamma$ or $(\pr^R)_\#\Gamma$ for $R=(r_1,\ldots,r_m)$ where $\pr^{r_1,\ldots,r_m}$ is the projection map $\pr^{r_1,\ldots,r_m}: \gamma\mapsto (\gamma(r_1),\ldots,\gamma(r_m))$. We also adopt the shorthands $\Gamma^R$ and $\Gamma^{r_1,\ldots,r_n}$. In case $\pi=\Gamma^{s,t}$ with $\mu_s=\Gamma^s$ and $\mu_t=\Gamma^t$ we take the vocabulary of Optimal transport saying that $\pi$ is a \emph{transport plan} or a \emph{coupling} of $\mu_s$ and $\mu_t$. The corresponding set is denoted by $\ma(\mu_s,\mu_t)$. Generally we note $\ma((\mu_t)_{t\in\mathcal{T}})$ the set of measures with marginals $(\mu_t)_{t\in\mathcal{T}}$. Note finally that the finite dimensional convergence $\Gamma_n\to \Gamma$ of the previous paragraph writes $(\pr^{r_1,\ldots,r_m})_\#\Gamma_n\to (\pr^{r_1,\ldots,r_m})_\#\Gamma$, for every $r_1,\ldots,r_m\in [0,1]$.

\point\ For every Polish space $\XX$ we denote by $\mathcal{P}_2(\XX)$ the 2-Wasserstein space $\{\mu\in\mathcal{P}(\XX):\int \mathrm{d}(x,x_0)^2\,\dd \mu(x)<\infty\}$ over $\XX$ (here $x_0$ is some and in fact any point of $\XX$). The distance $W_2$ defined on $\p_2(\XX)$ is recalled in Reminder \ref{reminder:wasserstein}.
\end{notation}

The Markov property is a classical notion; though as it plays a central role in this article we recall its definition.

\begin{defi}[Markov measure and Markov process]\label{defi:markovien}
Let $I$ be an interval and $(X_{t})_{t\in I}$ be a process of law $\Gamma$. The measure $\Gamma$ is {\em Markov} if $X$ is a Markov process in the usual sense, for which one of the formulations is:
\begin{equation}\label{eq:def_markovien}
\forall s\in I,\forall t>s, \law(X_t|\,(X_{u})_{u\leqslant s})=\law(X_t|\,X_s),
\end{equation}
where $\law(X_t|\,(X_u)_{u\leq s})$ is the law of $X_t$ conditionally to the $\sigma$-algebra generated by the $X_u$ for $u\leq s$. (In this case \eqref{eq:def_markovien} is satisfied by any process $X'$ of law $\Gamma$).
\end{defi}

In our Main Theorem we use also the following notion, precisely built in Definition \ref{defi:quantile_discretement_markovien}. We associate, with any process measure $\Gamma$, the distribution ``$\Gamma $ made Markov at a finite set $R\subset\R$ of times'',\label{madeMark} denoted by $\Gamma_{[R]}$. For any interval $I$ disjoint of $R$, the restrictions to $I$ of (the canonical processes associated with) $\Gamma$ and $\Gamma_{[R]}$ coincide. But for any two times $s<t$ separated by at least one element of $R$, the marginals at times $s$ and $t$ are independent knowing the value of the process at any intermediate time in $R$. More generally the future of any $r\in R$, conditional upon the present, is made independent of its past. With this operation Remark \ref{rem:markov} also provides a tractable characterization of the Markov measures that is fundamental in this paper.

\begin{convention}\label{conv:croissant}When we introduce sets $\{r_1,\ldots,r_m\}$, we mean implicitly $r_1<\ldots<r_m$, if not otherwise indicated.\end{convention}

The (kinetic) energy $\EE(\gamma)$ of a mapping $\gamma:[0,1]\mapsto \mathcal{X}$ in a metric space $(\mathcal{X},\mathrm{d})$ may be introduced as follows:
\begin{equation}\label{eq:energy}
\EE:\gamma\in \mathcal{C}([0,1],\mathcal{X})\mapsto\sup_{R}\sum_{k=0}^{m}\frac{\mathrm{d}(\gamma(r_k),\gamma(r_{k+1}))^2}{r_{k+1}-r_k}\in [0,+\infty],
\end{equation}
where $R=\{r_1,\ldots,r_m\}\subset\op]0,1\clo[$ and $(r_0,r_{m+1})=(0,1)$. Actually, finite energy implies continuity: if Expression \eqref{eq:energy} is finite, then $\gamma$ is continuous (see Proposition \ref{pro:relax}\ref{item:pro:relaxA}). Furthermore, $\EE(\gamma)=\int_0^1|\dot \gamma|^2(t)dt$ in a sense that is recalled in \S\ref{sub:EetA}. This notion of energy leads to the well-known notion of action, which is central in our article:

\begin{defi}\label{defi:action} If $\Gamma\in\p((\R^d)^{[0,1]})$ is concentrated (see Remark \ref{rem:concentrated}) on $\mathcal{C}([0,1],\linebreak[1]\R^d)$ its action $\A(\Gamma)$ is defined as:
\[\A(\Gamma)=\int_{\mathcal{C}([0,1],\linebreak[1]\R^d)}\EE(\gamma)\dd \Gamma(\gamma).\]
\end{defi}

\noindent The action satisfies the following classical inequality involving energies for curves in $\R^d$ and $\p(\R^d)$; its proof will be recalled in Remark \ref{rem:about_action}:
for all $\Gamma\in\p(\mathcal{C}([0,1],\R^d))$, if $(\pr^t)_\#\Gamma\in\p_2(\R^d)$ for all $t\in [0,1]$,
\begin{equation}\label{eq:inegalite_energie}\A(\Gamma)\geq \EE((\Gamma^t)_{t\in [0,1]}),\ \ \text{where }\Gamma^t\!\eqdef\!(\pr^t)_\#\Gamma,
\end{equation}
where the distance $\mathrm{d}$ involved in the definition of $\A$, through \eqref{eq:energy}, is the Euclidean distance on $\R^d$, and the distance involved in that of $\EE$, on the right side, the Wasserstein distance $W_2$ induced by $\mathrm{d}$ on $\p_2(\R^d)$. Note that a central aspect of the present paper is the analysis of how the equality case in \eqref{eq:inegalite_energie} can occur.
\begin{defi}\label{defi:lagrangian_repr}We call ``minimal Lagrangian representative'' of $\bmu$ a measure $\Gamma\in\ma_\mathcal{C}(\bmu)$ such that \eqref{eq:inegalite_energie} is an equality.
\end{defi}

We prove two convergence results, of close types, Theorem \ref{them:action}, {\em i.e.}, our Main Theorem, and Theorem \ref{them:commun_d_et_1}, both presented in \S\ref{sec:them5}. Our Main Theorem comes as a refinement of  well-known results on minimal representatives attached to a curve $(\mu_t)_t$ that are gathered in Theorem \ref{them:known}  ; it rests on our building of the Markov-quantile process and gives the existence of a {\em Markov} minimal Lagrangian representative, which is completely new and is the main point of the present article. This gives naturally rise to the question of whether a \emph{Markov} process is unique among the minimal Lagrangian representatives. The answer is no, see Example \ref{ex:MLr_nu}. Though we state a weak result of uniqueness: it follows from the uniqueness of $\mq$ proved in \cite{boujui1} that such a Markov representative is unique among the limits of measures of quantile processes made Markov ``at a finite set of times'' in $[0,1]$.

\begin{themBJ}[{\em i.e.}, Theorem \ref{them:action}]\maz\label{them:d}
Let $\bmu=(\mu_t)_{t\in[0,1]}$ be a curve of finite energy $\EE(\bmu)<\infty$ in $\p_2(\R)$.\smallskip

\point\label{p1:them:d} {\em (Existence of a Markov minimal Lagrangian representative)} There exists a minimal Lagrangian representative $\Gamma$ for $\bmu$, {\em i.e.},  such that \eqref{eq:inegalite_energie} is an equality, namely $\int \EE(\gamma)\,\dd \Gamma(\gamma)=\EE(\bmu)$, that satisfies:
\begin{itemize}
\item $\Gamma $ is Markov,
\item there exists a nested ({\em i.e.}, increasing) sequence $(R_n)_{n\in \N}$ of finite subsets of\/ $\op]0,1\clo[$ such that $\como_{[R_n]}$ (see page \pageref{madeMark} and Definition \ref{defi:quantile_discretement_markovien} for this measure) converges to $\Gamma$ in $\p(\mathcal{C})$.
\end{itemize}

\point\label{p2:them:d} {\em(Weak uniqueness property)} If a minimal Lagrangian representative $\Gamma$ satisfies \emph{both} points of \ref{p1:them:d} then it is $\mq$.
\end{themBJ}

The following example shows that in general \emph{Markov} minimal Lagrangian representatives are not uniquely determined.
\begin{ex}[Non uniqueness for Markov minimal Lagrangian representatives, see Example 5.4 in \cite{boujui1}]\label{ex:MLr_nu}
Let $\mu_t$ be $\la_{[t-3/4,t-1/4]}+\frac12\delta_0$ (where $\lambda_{[a,b]}$ is the Lebesgue measure restricted to $[a,b]$) and $\Gamma\in \P(\mathcal{C}([0,1],\R)$ be a measure concentrated on the affine trajectories defined by $\Gamma(t\mapsto 0)=1/2$ and $\Gamma(\{t\mapsto x_0+t:x_0\in A\})=\lambda_{[-3/4,1/4]}(A)$. This measure $\Gamma$ is a Lagrangian representative of the continuity equation attached to $(\mu_t)_{t\in [0,1]}$. It is a minimizer of the action under marginal constraints. It is also Markov but it is not the Markov-quantile measure.
\end{ex}
It is an open question for us to find  properties enhancing the first point of \ref{p1:them:d}, {\em e.g.}, perhaps the \emph{strong} Markov property, or properties alternative to the second point, to make $\mq$ the unique Markov minimal Lagrangian representative. Note that \cite{boujui1} provides further characterizations of the Markov-quantile measure that are based on the stochastic orders.\smallskip

We end \S\ref{sec:them5} with Theorem \ref{them:commun_d_et_1}. Its statement is too technical to be given in this introduction. It obtains the process $\mq$ as the unique Markov limit of a sequence of processes that geodesically --in the sense of Optimal Transport, see Definition \ref{defi:disp}-- interpolate $(\mu_t)_{t\in [0,1]}$, instead of the sequence $(\como_{[R_n]})_n$ in the Main Theorem. The point is that it provides a type of construction that does not rely on the quantile process and that makes sense in $\R^d$ for any $d$ and still furnishes minimal Lagrangian representative. However, for $d>1$ it is not known whether some adequate choices in the construction can make it Markov. One of our main sources of inspiration is Lisini's paper \cite{Lis} whose results are similar though our construction differs from it in several points, see Remark \ref{rem:lisini}. Lisini uses a sequence of dyadic partitions to attach a minimal Lagrangian representative to each absolutely continuous of order $p>1$ curve $(\mu_t)_{t\in [0,1]}$ of measures on metric spaces $\mathcal X$ that are more general than $\R^d$. 

We stress that the Markov property was up to now not involved  in the a priori rather analytic context of the dynamical Optimal Transport. As explained in \S1.3 of \cite{boujui1}, we came to involve it while we were considering Kellerer's Theorem \cite{Ke72}, that is nowadays mostly represented in Martingale Optimal Transport (and Peacocks), a young subfield of Optimal Transport that takes advantage of the older tradition of ``classical'' Optimal Transport. We found it particularly interesting to bring the other way around with the Markov property a new ingredient back to the parent theory.\smallskip

\subsection{Outline of the article}\label{sub_outline}
 In \S\ref{sec:intro5} we give a brief historical overview of the set of problems in which our results take place; this introduces the main concepts at stake and motivates our work. In \S\ref{sec:quick} we gather the few elements of \cite{boujui1} on which the present work relies, and that are necessary to its understanding. Theorems \ref{them:known} and \ref{them:from} are good summaries of the these two prelininary sections. In \S\ref{sec:them5} we state and prove Theorem \ref{them:action}, that is our Main Theorem above, and Theorem \ref{them:commun_d_et_1}. Finally \S\ref{sec:unique} presents some open questions raised by our 1-dimensional result.

\section{State of the art}\label{sec:intro5}
As we briefly mentioned at the beginning of the introduction and explain below in Reminder \ref{reminder:wasserstein}, quantile couplings are optimal transport plans for the quadratic cost function. This suggests that the quantile process $\como$ is a minimizer for some dynamical optimal transport problems. This is true and rather well-known ; one approach is in \cite{Pass} (see also \cite{BeGr}).
In this section we recall another standard approach that formulates optimality in terms of minimal Lagrangian representatives. Subsection \S\ref{sub:framework} explains the framework and concludes with Theorem \ref{them:known}. In \S\ref{sub:EetA} we prepare the following with useful definitions and results on $\EE$ and $\A$.

\subsection{Historical framework and reminders on minimal representatives}\label{sub:framework}

The origin of this research goes back to the interpretation by Arnold in \cite{Arn} of the solutions of the incompressible Euler equations on a compact Riemannian manifold as geodesic curves in the space of diffeomorphisms preserving the volume. In \cite{Brenier1989}, Brenier relaxed the minimization problem attached to those geodesics and introduced \emph{generalized geodesics} that are, in probabilistic terms, continuous processes $X=(X_t)_{t\in[0,1]}$ with $\law(X_t)$ equal to the Riemannian volume at every time
. The quantity to minimize is the action $\A(X)=\E\int_{0}^1|\dot{X_t}|^2\,\dd t=\int_{0}^1\E|\dot{X_t}|^2\,\dd t$, under the constraint that the marginals $\law(X_t)$ and $\law(X_0,X_1)$ are prescribed. Later, see \cite{JKO, Ott}, Otto and his coauthors discovered that the solutions of some PDEs, in particular the Fokker--Planck and porous medium equations can be thought of as curves of maximal (negative) slope for some entropy functionals in the space of probability measures $\p_2(\R^d)$ endowed with the 2-transport distance (alias Wasserstein distance). It catches a comprehensive picture of the infinite dimensional manifold of measures used in optimal transport, building a differential calculus on it, called ``Otto calculus''. In this context, the derivative of the curve $(\mu_t)_t$ at time $t$ shall be seen as a vector field $v_t$ of gradient type, square integrable with respect to $\mu_t$, such that the continuity equation:
\begin{equation}\label{eq:continuity_intro}\frac{\dd}{\dd t}\mu_t+\mathrm{div}(\mu_t v_t)=0
\end{equation}
is satisfied. This special (non homogeneous) vector field is precisely the minimizer of $\iint |v_t|^2\,\dd\mu_t \dd t$ among the vector fields satisfying \eqref{eq:continuity_intro}, the corresponding value being $\EE(\bmu)$. A thorough study of those questions has been conducted in the monograph \cite{AGS} by Ambrosio, Gigli and Savar\'e (see also \cite{Bernard, Mani, AS}) under very loose assumptions on the curve $(\mu_t)_t$ and the vector field $(v_t)_t$. They proved, in particular, that the vector field $(v_t)_t$ of minimal energy is uniquely determined if $(\mu_t)_t$ is absolutely continuous of order 2 (see ``$AC^2$'' in \S\ref{sub:EetA}). They showed also that a process minimizing the action, for prescribed marginals $\mu_t$, exists, by using limits of solutions of mollified versions of \eqref{eq:continuity_intro}. Almost every trajectory of this process is in fact solution of the Cauchy problem $\dot{X}_t=v_t(X_t)$. Furthermore, all the minimal Lagrangian representatives $(X_t)_t$  are tangent to the minimizing vector field $v=(v_t)_t$ attached to $\bmu$. Note that in the case of smooth enough curves $(\mu_t)_t$, the vector field $(v_t)_t$ is also smooth and the minimal Lagrangian representative is uniquely determined. But in general whereas this field $v$ is unique, no uniqueness statement is satisfied by $(X_t)_t$. In a further work \cite{Lis}, Lisini studied, in fact in a broader framework, the $AC^2$ curves of probability measures on a metric space. In this context where the continuity equation is not defined, he also proved that there exists a minimal Lagrangian representative. The following is standard and based on the works of Ambrosio--Gigli--Savar\'e and Lisini.

\begin{them}[Existence and uniqueness for minimal representatives]\label{them:known}\maz
Take a curve $\bmu=\linebreak[1](\mu_t)_{t\in[0,1]}$ in Wasserstein space $\mathcal{P}_2(\R^d)$ with finite energy $\EE(\bmu)$. Then:\smallskip

\point\label{pt:eul} {\em (Eulerian statement)} There exists a family $(v_t)_{t\in [0,1]}$ of vector fields satisfying the continuity equation \eqref{eq:continuity_intro} and such that the inequality:
\[\int_0^1\int|v_t|^2\,\dd \mu_t\,\dd t\geqslant \EE(\bmu)\]
becomes an equality. This family is unique.\smallskip

\point\label{pt:lag} {\em (Lagrangian statement)} There exists $\Gamma\in\ma_\mathcal{C}(\bmu)$ such that Inequality \eqref{eq:inegalite_energie}:
$\A(\Gamma)\geqslant \EE(\bmu)$
is an equality.\smallskip

\point\label{pt:ens} {\em (Link between them)} For any $\Gamma$ minimizing the action, {\em i.e.}, making \eqref{eq:inegalite_energie} an equality, the curve $\gamma\in \mathcal{C}$ is $\Gamma$-almost surely a solution of the ODE:
$$\dot\gamma(t)=v_t(\gamma_t),$$
for almost every time. 
\end{them}

The uniqueness of $(v_t)_{t\in [0,1]}$ in \ref{pt:eul} encourages to seek, among the processes $(X_t)_t$ in \ref{pt:lag}, processes satisfying some additional properties, trying by that to yield uniqueness. This is what we do in Theorems \ref{them:action} and \ref{them:commun_d_et_1} with $\mq$.

\subsection{Some reminders on the energy ${\cal E}$ of a curve of probabilities and the action ${\cal A}$ of a probability on curves.}\label{sub:EetA}

The definitions and results on curves, their energy and the Wasserstein distance recalled here are close to Brenier's paper \cite[Section 3]{Brenier1989}. These reminders are required to prove later that $\como$ and $\mq$ are minimal Lagrangian representatives, what is done respectively in Remark \ref{rem:about_action} and Section \ref{sec:them5}, the former being rather basic the latter being new.

\begin{reminder-notation}\label{nota:length}\maz\ Let $(\mathcal{X},\mathrm{d})$ be a  metric space and $\gamma$ a curve in $\mathcal{C}([0,1],\mathcal{X})$. The curve $\gamma$ is said to be absolutely continuous of order $p\geq 1$ and we note $\gamma\in AC^p([0,1],\mathcal{X})$ (or simply $AC^p$) if there exists $m\in L^p([0,1], \R)$ such that $d(\gamma(a),\gamma(b))\leq \int_a^b m\,\dd\la$ for every $a<b$. If $\gamma\in AC^p$, an admissible choice for $m$ is the so-called metric derivative $|\dot\gamma|$ defined for almost every $t$ by:
\[|\dot\gamma|(t)=\lim_{h\to 0}\frac{d(\gamma(t+h),\gamma(t))}{h}.\]
(if $(\mathcal{X},\mathrm{d})=(\R^n,\|\cdot\|)$ and $\gamma$ is differentiable at $t$, this is $\|\dot\gamma(t)\|$, so the notation is consistent). 
 \end{reminder-notation}

Recall that $\EE(\gamma)$ was introduced in \eqref{eq:energy} for a curve $\gamma$ parametrized on $[0,1]$. The definition extends trivially for curves on $[a,b]$. A {\em partition} of an interval $[a,b]$ is a finite subset $R=\{r_0,\ldots,r_{m+1}\}$ of $[a,b]$ with $(r_0,r_{m+1})=(a,b)$. The {\em mesh} $|R|$ of $R$ is $\max_{k=0}^m|r_{k+1}-r_k|$. We denote by $\EE(\gamma,R)$ the quantity approximating $\EE(\gamma)$ on the right-hand side in \eqref{eq:energy}. The next proposition gathers well-known facts on $\EE$.

\begin{pro}\label{pro:relax}\maz
Let $\gamma$ be a mapping from $[a,b]$ to $\mathcal{X}$. Then:\smallskip

\point\label{item:pro:relaxA} If $\EE(\gamma)<\infty$ then $\gamma$ is continuous.

\point\label{item:pro:relaxB}\  {\bf(i)} If a partition $R'$ is finer than $R$, $\EE(\gamma,R)\leq\EE(\gamma,R')$. {\bf(ii)} If $\gamma$ is continuous, the limit $\lim_{|R|\to 0} \EE(\gamma,R)$ is well-defined and equals $\EE(\gamma)$. {\bf(iii)} $\EE(\gamma)$ is finite if and only if $\gamma\in AC^2([a,b],\mathcal{X})$; in this case $\EE(\gamma)=\int_a^b|\dot{\gamma}|^2(t)\dd t$.

\point\   \label{ccc}$\EE(\gamma)$ is lower semi-continuous for the  uniform convergence.
\end{pro}

Remark \ref{rem:about_action} recalls properties of $\A$ introduced in Definition \ref{defi:action}. Its points {\bf(c, d)} use Reminders \ref{remind:quantile} and \ref{reminder:wasserstein}.

\begin{reminder}[Quantiles]\label{remind:quantile}
The {\em quantile of level $\alpha$} of a measure $\mu\in\m(\R)$ is the smallest real number $x_\mu(\alpha)$ such that $\mu(\op]-\infty,\linebreak[1]x_\mu(\alpha)])\geq \alpha$ and $\mu([x_\mu(\alpha),+\infty\clo[)\geq 1-\alpha$. 
The quantile process $(Q_\tau)_{\tau\in \Tt}$, defined on $\Omega=[0,1]$ with the Lebesgue measure, is given by $Q_t(\alpha)=x_{\mu_t}(\alpha)$, and we denote $\law(Q)$ by $\como\in \ma((\mu_t)_{t\in \Tt})$. In particular $\law(Q_t)=\mu_t$ for every $t\in \Tt$. See Definition 3.23 of \cite{boujui1} for full details. When $\Tt$ has cardinal 2, $\como$ is called the \emph{quantile transport (plan)} or the \emph{quantile coupling} (it is a slight abuse since couplings are \emph{usually} random variables).
\end{reminder}

\begin{reminder}[Optimal transport]\label{reminder:wasserstein}
On $\p(\R^d)^2$ the following infimum (minimum by the Prokhorov Theorem) has all the properties of a distance except that it may be infinite; it is called the $2$-Wasserstein distance:
\begin{equation}\label{eq:def_w}
W_2(\mu,\nu)=\min_{P\in \ma(\mu,\nu)}\sqrt{\int \|y-x\|^2\dd P(x,y)}.
\end{equation}
On the Wasserstein space (recall Notation \ref{not:mac}), $W_2$ is finite, thus is a true distance. A minimizer $P$ of \eqref{eq:def_w} is called an \emph{optimal} transport plan between $\mu$ and $\nu$. If $d=1$ and $W_2(\mu,\nu)<\infty$ the quantile coupling $\como(\mu,\nu)$ introduced in Reminder \ref{remind:quantile} is the unique optimal transport plan, see for instance \cite{RR1}. Therefore, for the quantile process $\como\in\ma((\mu_t)_t)$:
\begin{equation}\label{eq:q_et_w}
W_2(\mu_s,\mu_t)=\sqrt{\int |y-x|^2\,\dd \como^{s,t}(x,y)}.
\end{equation}
\end{reminder}

\begin{rem}\label{rem:about_action} {\bf(a)} If $\A(\Gamma)<+\infty$, $\Gamma$ is in fact concentrated on $AC^2$.

{\bf(b)} If $\Gamma$ is a measure on $\mathcal{C}$, \emph{e.g.}, an element of $\ma_\mathcal{C}(\bmu)$, then:
\begin{align}\label{eq:action_energy}
\A(\Gamma)&\eqdef\int_{\mathcal{C}}\lim_{|R|\to 0}\EE(\gamma,R)\,\dd\Gamma(\gamma)=\lim_{|R|\to 0}\int_{\mathcal{C}}\EE(\gamma,R)\,\dd\Gamma(\gamma)
\end{align}
because of the monotone convergence theorem: use a monotone sequence of partitions and Proposition \ref{pro:relax}\ref{item:pro:relaxB}.

{\bf (c)} If $\Gamma\in\ma_\mathcal{C}(\bmu)$, then:
\begin{equation}\label{eq:a_superieur_a_e}
\A(\Gamma)\geqslant \EE(\bmu).
\end{equation}
Indeed:
\begin{align}
\int_{\mathcal{C}}\EE(\gamma,R)\,\dd\Gamma(\gamma)&=\int_\mathcal{C} \sum_{k=1}^m\|\gamma(r_{k})-\gamma(r_{k+1})\|^2/(r_{k+1}-r_k)\,\dd \Gamma(\gamma)\nonumber\\\label{ici}
&= \sum_{k=1}^m \left(\int_\mathcal{C} \|\gamma(r_{k})-\gamma(r_{k+1})\|^2/(r_{k+1}-r_k)\,\dd \Gamma(\gamma)\right)\\ &\geqslant \sum_{k=1}^m W_2(\mu_{r_{k}},\mu_{r_{k+1}})^2/(r_{k+1}-r_k)=\EE(\bmu,R). \nonumber
\end{align}
The inequality comes from the fact that $(\pr^{r_k,r_{k+1}})_\#\Gamma$ is in $\ma(\mu_{r_k},\mu_{r_{k+1}})$, so that $\int_\mathcal{C} \|\gamma(r_k)-\gamma(r_{k+1})\|^2\,\dd \Gamma(\gamma)\geqslant W_2(\mu_{r_k},\mu_{r_{k+1}})^2$. Now, thanks to (\ref{eq:action_energy}), when $|R|$ tends to 0 this provides $\A(\Gamma)\geqslant \EE(\bmu)$.

{\bf(d)} In dimension 1 if $\EE(\bmu)<+\infty$ then $\como$ is a minimal Lagrangian representative: by \eqref{eq:q_et_w} endowed in (c), equality occurs in \eqref{ici} for $\Gamma=\como$, thus $\A(\como)=\EE(\bmu)$. We used that $\como$ is concentrated on $\C$. This is discussed in Remark \ref{rem:sale}.
\end{rem}

\maz

\section{The Markov-quantile process $\mq$ attached to $\bmu$}
\label{sec:quick}

We gather below the main notions of \cite{boujui1} the present article relies on like concatanation (Definition \ref{defi:concatenation}) and measure made Markov at the times of a partition (Definition \ref{defi:quantile_discretement_markovien}). Theorem \ref{them:from} that concludes the section is the core of the theorems in \cite{boujui1}. However, let us start with an important measure theoretic remark.

\begin{rem}\label{rem:concentrated} As will deal with measures in $\ma_\mathcal{C}((\mu_t)_t)$, but make use of theorems about $\ma((\mu_t)_t)$, we wish to see $\ma_\mathcal{C}((\mu_t)_t)$ as a subset of $\ma((\mu_t)_t)$, {\em i.e.}, to give a meaning to the subset ``$\{Q\in\ma((\mu_t)_t)\,|\,Q(\mathcal{C}([0,1],\R^d))=1\}$'', which makes no sense as $\mathcal{C}([0,1],\R^d)$ is not in the cylindrical $\sigma$-algebra of $(\R^d)^{[0,1]}$. It is classically done as follows. For any $Q\in\ma((\mu_t)_t)$, we will say that $Q$ is ``{\em concentrated on $\C$}'' if, for any dense countable subset $D$ of $[0,1]$, $Q(\{f\in(\R^d)^{[0,1]}\,|\,f_{|D}\ \text{is uniformly continuous}\})=1$; the latter subset is in the cylindrical $\sigma$-algebra of $(\R^d)^{[0,1]}$, as it is a countable union of countable intersections of open sets of the product topology. Notice that the uniform continuity condition amounts to the fact that $f_{|D}$ extends as a continuous function on $[0,1]$. Then $\ma_\mathcal{C}((\mu_t)_t)$ and the set of measures of $\ma((\mu_t)_t)$ concentrated on $\C$ are in 1-1 correspondence as follows.

-- If $\Gamma\in\ma_\mathcal{C}((\mu_t)_t)$, you can define $\hat\Gamma\in\ma((\mu_t)_t)$ concentrated on $\C$, by $\hat\Gamma:B\mapsto\Gamma(B\cap\mathcal{C}([0,1],\R^d))$.

-- If $Q\in\ma((\mu_t)_t)$, take any (its choice will not matter) countable dense subset $D$ of $[0,1]$ and define $\check Q\in\ma_\mathcal{C}((\mu_t)_t)$ by $\check Q:B\mapsto Q(\{f\in(\R^d)^{[0,1]}\,|\,\exists g\in B:f_{|D}=g_{|D}\})$. We let the reader check that, as $D$ is countable, the latter subset is in the cylindrical $\sigma$-algebra and that, in restriction to the the set of measures concentrated on $\C$, the definition of $\check Q$ is independent of the choice of $D$, $Q\mapsto \check Q$ is injective, and $\Gamma\mapsto\hat\Gamma$ is its inverse function.

So by a slight abuse, we will not distinguish $\Gamma$ and $\hat\Gamma$ or $Q$ and $\check Q$. For $\Gamma\in\ma_\mathcal{C}((\mu_t)_t)$ and $R$ a finite subset of $\R$, this gives sense, \emph{e.g.},  to $\Gamma_{[R]}$ after Definition \ref{defi:quantile_discretement_markovien}.
\end{rem}

\begin{rem}\label{rem:sale}
In Remark \ref{rem:about_action}(d) we used \eqref{eq:action_energy} voluntarily without justification to simplify the purpose. In fact since we don't know whether $t\in [0,1]\mapsto Q_t(\alpha)$ (remind Reminder \ref{remind:quantile}) is continuous for almost every $\alpha\in [0,1]$ we need to prove that $\como$ is concentrated on $\C$ in the sense of Remark \ref{rem:concentrated}. In a nutshell this can be shown as follows: if $\EE(\bmu)<+\infty$, for every increasing sequence of partitions $(R_n)_n$ with $R_\infty=\bigcup_n R_n$ dense in $[0,1]$ we have $\lim_{n\to\infty} \int_{\R^{[0,1]}} \EE(\gamma,R_n)\mathrm d\como(\gamma)\leq \EE(\bmu)<+\infty$. Therefore, by the monotone convergence theorem, $\gamma_{|R_{\infty}}$ has $\como$-almost surely finite energy as a mapping defined on $R_\infty$. Since $R_\infty$ is arbitrary chosen, this suffices to prove that $\como$ is concentrated on $\C$ as defined in Remark \ref{rem:concentrated}.
\end{rem}

Now $E$ stands for some Polish space and $\mathcal{B}(E)$ for the set of its Borel subsets.

\begin{definotation}\label{defnot}
A probability kernel, or kernel $k$ from $E$ to $E'$ is a map $k:E\times \mathcal{B}(E')\to [0,1]$ such that $k(x,\cdot)$ is a probability measure on $E'$ for every $x$ in $E$ and $k(\cdot,B)$ is a measurable map for every $B\in {\mathcal B}(E')$.

Every transport plan $P\in\p(E\times E')$ can be disintegrated with respect to its first marginal $P^1\eqdefup(\pr^1)_\#P$ and a kernel that we denote by $k_P$, defined from $E$ to $E'$, so that, for every bounded continuous function $f$:
$$\iint f(x,y)\,\dd P(x,y)=\int\left(\int f(x,y)\,k_P(x,\dd y)\right)\,\dd P^1(x)$$

\end{definotation}

The two following concepts may appear unusual. The interested reader is invited to consult \cite{boujui1} for more details.

\begin{defi}[See \cite{boujui1}, Definition 2.8] \label{defi:concatenation} 
If $\mu_i\in\p(E_i)$ for $i\in\{1,2,3\}$, $P_{1,2}\in\ma(\mu_1,\mu_2)$ and $P_{2,3}\in\ma(\mu_2,\mu_3)$, their {\em concatenation} $P_{1,2}\circ P_{2,3}$ is the unique $R\in\p(E_1\times E_2\times E_3)$ such that for every $ (B_1,B_2,B_3)\in\prod_{i=1}^3{\cal B}(E_i)$:
\begin{align}\label{eq:concatenation}
R(B_1\times B_2\times B_3)&=\int_{x\in B_1}\int_{y\in B_2}\int_{z\in B_3} \dd\mu_1(x)k_{1,2}(x,\dd y)k_{2,3}(y,\dd z).
\end{align}
In particular, $R\in\ma(\mu_1,\linebreak[1]\mu_2,\linebreak[1]\mu_3)$, $(\pr^{1,2})_\#R=P_{1,2}$, and $(\pr^{2,3})_\#R=P_{2,3}$.
\end{defi}

\begin{defi}[See Definition 4.18 of \cite{boujui1}] \label{defi:quantile_discretement_markovien}  If $\Gamma \in\ma((\mu_t)_t)$ and if $R=\{r_1,\ldots,r_m\}\subset\R$ we denote by $\Gamma_{[R]}\in\ma((\mu_t)_t)$ the measure \emph{$\Gamma$ made Markov at the points of $R$} defined by the data of its finite marginals $(\pr^{S})_\# \Gamma_{[R]}$, for all finite $S$ containing $R$, as follows.
$$(\pr^{S})_\# \Gamma_{[R]}= \underbrace{\Gamma^{s^0_1,\ldots, s^0_{n_0},r_1}\circ \Gamma^{r_1,s^1_{1},\ldots,s^1_{n_1},r_2}\circ\cdots\circ \Gamma^{r_m,s^m_{1},\ldots,s^m_{n_m}}}_{\text{(denoted immediately below by $\Gamma_S$)}},$$
where $S=\{s^0_1,\ldots, s^0_{n_0},r_1,s^1_{1},\ldots,s^1_{n_1},r_2,\ldots,r_m,s^m_{1},\ldots,s^m_{n_m}\}$ and where the first or last term disappears if $n_0$ or $n_m$ is null, respectively. These marginals are consistent in the sense that for all finite subsets $S$ and $S'$ of $\R$, containing $R$, $S'\subset S\Rightarrow (\pr^{S'})_\#\Gamma_S=\Gamma_{S'}$. So by the Kolomogorov-Daniell theorem (see Proposition 2.12 of \cite{boujui1}), this defines  $\Gamma_{[R]}$. We also commit an abuse of language: $\Gamma_{[R]}$ is rather the ``\emph{law of a process $X$ of law} $\Gamma$, made Markov at the points of $R$''.
\end{defi}

\begin{rem}\label{rem:markov}Let $I$ be some interval. A process $X=(X_t)_{t\in I}$ and $\Gamma\in\p(\R^I)$ its measure; $X$ is therefore Markov (see Definition \ref{defi:markovien}) if and only if, for any finite subset $R$ of $I$, $\Gamma_{[R]}=\Gamma$.
\end{rem}
\begin{rem}
Note that if $\Gamma$ is concentrated on $\ma_\C(\bmu)$ then $\Gamma_{[R]}$ is also concentrated on $\ma_\C(\bmu)$.
\end{rem}

Here are the parts of Theorems A and B of \cite{boujui1} that are used in this article.

\begin{them}[From the main theorems in \cite{boujui1}]\label{them:from}
There exists one and only one Markov law $\mq$ that is a limit in the finite-dimensional sense of sequences of laws of type $(\como_{[R_n]})$, being $(R_n)$ an increasing sequence of finite subsets of $\R$. Moreover, one can assume that $R_\infty=\cup_n R_n$ is dense in $\R$.

\end{them}
\begin{proof}
The existence of such an increasing sequence $(R_n)_{n\in \N}$ such that $\como_{[R_n]}$ converges to $\mq$ in the finite-dimensional sense comes from \cite[Theorem B]{boujui1} (where the finite-dimensional convergence is called \emph{weak} convergence). The uniqueness comes from the uniqueness of $\mq$ as a Markov measure satisfying (iv) in \cite[Theorem A]{boujui1}. The density statement comes from (c)(i) in \cite[Theorem 4.21]{boujui1} that is a more detailed version of Theorem B.
\end{proof}
\begin{rem}
The claim of page \pageref{point:ressemble} that $\mq$ resembles as much as possible the quantile process $\como$ attached to $\bmu$ clearly appears in Theorem \ref{them:from}. It is also strengthened from the side of the stochastic orders by (a)(iii) of Theorem A in \cite{boujui1}.
\end{rem}

\section{Our resulting theorems on $\mq$ as a minimizer in this context}\label{sec:them5}

In this section we state and prove our theorems. In Lemma \ref{lem:energy_rendu_markov} and Proposition \ref{pro:ap_egale_emu} we pursue our investigation on $\como$ started in Remark \ref{rem:about_action} with new results on $\como_{[R]}$ and $\mq$, respectively. Then we prove Theorem \ref{them:action} and Theorem \ref{them:commun_d_et_1}.
\begin{lem}[$\como$ and $\como_{[R]}$ are minimal Lagrangian representatives]\label{lem:energy_rendu_markov}
Let $\bmu=(\mu_t)_{t\in[0,1]}$ be a family of real measures in $\p_2(\R)$ and $\como$ the attached quantile process. We assume that $\como$ is concentrated on $\mathcal{C}$ so that $\A(\como)$ makes sense (this happens as soon as $\EE(\bmu)<\infty$, recall Remark \ref{rem:sale}). Let $R$ be a partition of $[0,1]$. Then $\A(\como_{[R]})=\A(\como)\left(=\EE(\bmu)\in [0,+\infty]\right)$.
\end{lem}
\begin{proof}
The equality $\A(\como)=\EE(\bmu)$ is part of Remark \ref{rem:about_action}. In fact the other equality $\A(\Gamma)=\A(\Gamma_{[R]})$ is satisfied not only for $\Gamma=\como$ but for any $\Gamma$ concentrated on $\mathcal{C}([0,1],\R^d)$ even for $d>1$. Please look at Remark  \ref{rem:about_action} and consider the equality in \eqref{ici} to see that for partitions $(R_n)_{n\in \N}$ finer than $R$ one has $\int \EE(\gamma, R_n)d\Gamma(\gamma)=\int \EE (\gamma, R_n)d\Gamma_{[R]}(\gamma)$. For a sequence of such partitions, by \eqref{eq:action_energy} one gets:
\[\A(\Gamma_{[R]})=\lim_{|R_n|\to 0}\int \EE(\gamma, R_n)d\Gamma_{[R]}(\gamma)=\lim_{|R_n|\to 0}\int \EE (\gamma, R_n)d\Gamma(\gamma)=\A(\Gamma).\qedhere\]

\end{proof}

Lemma \ref{lem:energy_rendu_markov} ``passes to the (finite dimensional) limit'' when $(R_n)_n$ is such that $\como_{[R_n]}(\bmu)\underset{n\rightarrow\infty}{\longrightarrow} P$, where $P\in\ma_{{\cal C}}(\bmu)$ coincides with the Markov-quantile measure $\mq$ (in the sense of Remark \ref{rem:concentrated}). Recall that, for simplicity, depending on the context we see $\mq$ (or $\como$) as an element of $\ma_\mathcal{C}(\bmu)\subset \p(\mathcal{C})$ or $\ma(\bmu)\subset \p(\R^{[0,1]})$.

\begin{pro}\label{pro:ap_egale_emu}
The Markov-quantile process $\mq\in \ma(\bmu)$ satisfies $\A(\mq)=\EE(\bmu)$. Moreover for every $(R_n)_n$ as in Theorem \ref{them:from}, $(\como_{[R_n]})_n$ converges weakly to $\mq$ in $\ma_\mathcal{C}(\bmu)\subset\p(\mathcal{C})$.
\end{pro}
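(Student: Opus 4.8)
The plan is to prove the two assertions together, using the fact (from Theorem B of \cite{boujui1}, cited in the excerpt as Theorem A+B) that there is a nested sequence $(R_n)_n$ of finite subsets of $\op]0,1\clo[$ along which $\como_{[R_n]}$ converges in the finite dimensional sense to $\mq$, and using Remark \ref{rem:about_action}(c)--(d). The key point separating ``finite dimensional convergence in $\p(\R^{[0,1]})$'' from ``convergence in $\p(\mathcal{C})$'' is tightness: once one knows the family $(\como_{[R_n]})_n$ is tight in $\p(\mathcal{C})$, its unique finite dimensional limit point $\mq$ is automatically its limit in $\p(\mathcal{C})$, and moreover $\mq$ is then concentrated on $\mathcal{C}$ so that it coincides (via Remark \ref{rem:concentrated}) with the measure in $\ma_\mathcal{C}(\mu)$ we want. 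So the core of the argument is a tightness/energy bound.

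First I would record the uniform action bound: by Remark \ref{rem:about_action}(d) (applied with $\Gamma=\como(\mu)$, for which equality holds in (c) by Remark \ref{rem:about_action}(c) since $d=1$), one has $\A(\como_{[R_n]})=\A(\como)=\EE(\mu)<+\infty$ for every $n$. Thus $\int_\mathcal{C}\EE(\gamma)\,\dd\como_{[R_n]}(\gamma)=\EE(\mu)$, a bound \emph{independent of $n$}. Next I would invoke the standard tightness criterion for laws of continuous processes driven by an equicoercive functional: since $\EE$ has compact sublevel sets in $\mathcal{C}([0,1],\R)$ up to translation — more precisely, for $c>0$ and $x_0\in\R$ the set $\{\gamma:\EE(\gamma)\leq c,\ \gamma(0)\in K\}$ is relatively compact in $\mathcal{C}$ for $K$ compact, by Proposition \ref{pro:relax}(a) (the uniform $\tfrac12$-H\"older bound $d(\gamma(s),\gamma(t))\leq\sqrt c\,\sqrt{|s-t|}$) together with Arzel\`a--Ascoli — the measures $\como_{[R_n]}$, which all have the same marginal $\mu_0$ at time $0$ (a tight single measure) and satisfy $\int\EE\,\dd\como_{[R_n]}=\EE(\mu)$, form a tight family in $\p(\mathcal{C})$ by Markov's inequality. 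Then I would extract, from any subsequence, a further subsequence converging in $\p(\mathcal{C})$ to some $P\in\p(\mathcal{C})$; since convergence in $\p(\mathcal{C})$ implies finite dimensional convergence and the finite dimensional limit is $\mq$, we get $P=\mq$ (in particular $\mq$ is concentrated on $\mathcal{C}$, hence identified with an element of $\ma_\mathcal{C}(\mu)$). As the limit is the same for every subsequence, the whole sequence $(\como_{[R_n]})_n$ converges to $\mq$ in $\p(\mathcal{C})$.

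It remains to get $\A(\mq)=\EE(\mu)$. The inequality $\A(\mq)\geq\EE(\mu)$ is Remark \ref{rem:about_action}(c), i.e.\ \eqref{eq:a_superieur_a_e}, since $\mq\in\ma_\mathcal{C}(\mu)$. For the reverse inequality I would use lower semicontinuity of $\Gamma\mapsto\A(\Gamma)=\int\EE\,\dd\Gamma$ under convergence in $\p(\mathcal{C})$: $\EE$ is l.s.c.\ on $\mathcal{C}$ for the uniform topology (Proposition \ref{pro:relax}(c)) and nonnegative, so by the portmanteau theorem $\Gamma\mapsto\int\EE\,\dd\Gamma$ is l.s.c.\ on $\p(\mathcal{C})$; hence
$$\A(\mq)\leq\liminf_{n\to\infty}\A(\como_{[R_n]})=\liminf_{n\to\infty}\EE(\mu)=\EE(\mu),$$
and combining the two inequalities gives $\A(\mq)=\EE(\mu)$.

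\textbf{Main obstacle.} The delicate step is the tightness of $(\como_{[R_n]})_n$ in $\p(\mathcal{C})$ — i.e.\ upgrading the a priori finite dimensional convergence of Theorem B of \cite{boujui1} to weak convergence on path space. Everything hinges on the uniform energy bound $\A(\como_{[R_n]})=\EE(\mu)$ from Remark \ref{rem:about_action}(d) being genuinely available for \emph{all} $n$ (it is, since each $R_n$ is finite and $\como$ realizes equality in (c)), and on the compactness of energy sublevel sets in $\mathcal{C}$, which is where Proposition \ref{pro:relax}(a) and Arzel\`a--Ascoli enter; one must also make sure the time-$0$ marginal (or any single time marginal) is fixed so that Arzel\`a--Ascoli's ``pointwise boundedness'' hypothesis is met. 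Once tightness is in hand, identifying the limit is immediate because the finite dimensional limit is already known to be $\mq$, and the energy identity follows from soft semicontinuity arguments.
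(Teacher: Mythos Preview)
Your proof is correct and follows essentially the same approach as the paper: establish the uniform energy bound $\A(\como_{[R_n]})=\EE(\mu)$ via Remark \ref{rem:about_action}(c)--(d), deduce tightness in $\p(\mathcal{C})$ from this bound together with the fixed time-$0$ marginal, identify every limit point with $\mq$ through the known finite dimensional convergence, and conclude $\A(\mq)=\EE(\mu)$ by lower semicontinuity of $\A$. The only cosmetic difference is that the paper obtains relative compactness of the sublevel sets via the compact Sobolev embedding $\mathcal{W}^{1,2}([0,1])\hookrightarrow\mathcal{C}$, whereas you use the $\tfrac12$-H\"older bound from Proposition \ref{pro:relax}\ref{item:pro:relaxA} and Arzel\`a--Ascoli directly; both routes are equivalent here.
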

\begin{proof}
Let $(R_n)_n$ be a sequence of partitions of $[0,1]$ such that $\como_{[R_n]}\rightarrow\mq$ in the finite-dimensional sense as in Theorem \ref{them:from}. To get the result, it suffices to recall that $\A$ is lower semi-continuous (Proposition \ref{pro:relax}\ref{ccc}) and that it is known to have compact sublevels in the weak topology, see \cite[Proof of Theorem 3.3]{AmFi09}. With Lemma \ref{lem:energy_rendu_markov}, it implies that any subsequence $s$ of $(\como_{[R_n]})_n$ admits a (weak, and hence finite-dimensional) limit point $\Gamma_s$. By uniqueness $\Gamma_s$ is always $\mq$ so that we have proved $\como_{[R_n]}\rightarrow\mq$ weakly. Hence ${\cal A}(\mq)\leqslant \A(\como_{[R_n]})=\EE(\bmu)$, so by \eqref{eq:a_superieur_a_e}, ${\cal A}(\mq)=\EE(\bmu)$.
\end{proof}

Here is our Main Theorem. Notice that by Theorem \ref{them:known}\ref{pt:ens}, the random curves of the Markov-quantile process are integral curves of the minimizing vector field in  Theorem \ref{them:known}\ref{pt:eul}.

\begin{them}[$\mq$ is a Markov minimal Lagrangian representative]\label{them:action}\maz
Take a curve $\bmu=\linebreak[1](\mu_t)_{t\in[0,1]}$ in Wasserstein space $\mathcal{P}_2(\R)$ with finite energy $\EE(\bmu)$. There exists $\Gamma\in\ma_\mathcal{C}(\bmu)$ such that:
\begin{itemize}
\item[\point\label{item1:pt:lag}] Inequality \eqref{eq:a_superieur_a_e}:
$\A(\Gamma)\geqslant \EE(\bmu)$
is an equality,
\item[\point] the measure $\Gamma$ is Markov,
\item[\point] it is the limit in $\p(\mathcal{C})$ of a sequence $(\como_{[R_n]})_n$.
\end{itemize}
Such a $\Gamma$ is unique in $\ma_\mathcal{C}(\bmu)$; it is the Markov-quantile process $\mq$.
\end{them}

\begin{proof} Proposition \ref{pro:ap_egale_emu} shows that $\Gamma=\mq$ satisfies (a)--(c). Theorem \ref{them:from} implies uniqueness from (b) and (c). 
\end{proof}

\noindent To state our second result, Theorem \ref{them:commun_d_et_1}, we need to introduce the following definition. In it, remember that an optimal transport plan is defined in Reminder \ref{reminder:wasserstein}.

\begin{defi}\label{defi:disp}\maz
Let $R=\{r_0,r_1,\ldots,r_m,r_{m+1}\}$ be a partition of $[0,1]$ and $\bmu=(\mu_t)_{t\in [0,1]}\in \p(\R^d)^{[0,1]}$. We denote by $\disp_{R}(\bmu)$ or more simply $\disp_{R}$ the set of measures $\Gamma\in\p(\mathcal{C}([0,1],\R^d))$ such that: (i) conditionally on any `present' time $r\in R$, the past is independent from the future; (ii) $\Gamma$ interpolates linearly (hence in fact optimally)  $\mu_{r_i}$ and $\mu_{r_{i+1}}$. The  conditions for $\Gamma$ to be in $\disp_{[R]}$ are more concretely the following: for each $i\in\{0,\ldots, m\}$,

\point\label{p1defi:disp} the coupling $\Gamma^{r_i,r_{i+1}}\in\ma(\mu_{r_i},\mu_{r_{i+1}})$ is an optimal transport plan between $\mu_{r_i}$ and $\mu_{r_{i+1}}$,

\point\ for $\{\lambda_1,\ldots,\lambda_n\}\subset[0,1]$ and $\mathrm{m}^\lambda:(x,y)\in (\R^d)^2\mapsto \lambda y+(1-\lambda)x$, we have:
\[(\mathrm{m}^{\lambda_1},\ldots, \mathrm{m}^{\lambda_n})_\#\Gamma^{r_i,r_{i+1}}=\Gamma^{\mathrm{m}^{\lambda_1}(r_i,r_{i+1}),\ldots, \mathrm{m}^{\lambda_n}(r_i,r_{i+1})},\]

\point\ for all finite $S=\{s^0_1,\ldots, s^0_{n_0},r_1,s^1_{1},\ldots,s^1_{n_1},r_2,\ldots,r_m,s^m_{1},\ldots,s^m_{n_m}\}$ containing $R\setminus\{r_0,r_{m+1}\}=\{r_1,\ldots,r_m\}$, 
$$(\pr^{S})_\# \Gamma= \Gamma^{s^0_1,\ldots, s^0_{n_0},r_1}\circ \Gamma^{r_1,s^1_{1},\ldots,s^1_{n_1},r_2}\circ\ldots\circ \Gamma^{r_m,s^m_{1},\ldots,s^m_{n_m}},$$
where the first and/or last terms disappear if $n_0$ and/or $n_m$ is null. 
\end{defi}

\begin{rem}\label{rem:singleton}Note that $\#\disp_{R}=1$ if and only if each set $\ma(\mu_{r_i},\mu_{r_{i+1}})$, appearing in \ref{p1defi:disp}, contains a unique optimal transport. It is the case when $d=1$, where $\ma(\mu_{r_i},\mu_{r_{i+1}})=\{\como(\mu_{r_i},\mu_{r_{i+1}})\}$, see Reminder \ref{reminder:wasserstein}.
\end{rem}

\begin{them}\label{them:commun_d_et_1}
Let $d$ be a positive integer and $\bmu=(\mu_t)_{t\in[0,1]}$ a curve of finite energy in $\p_2(\R^d)$. For every nested ({\em i.e.}, increasing) sequence $(R_n)_n$ of finite subsets $R_n$ of $[0,1]$, with $R_\infty\eqdef\cup_n R_n$ dense in $[0,1]$, and $\Gamma_n\in \disp_{R_n}$ for all $n\in \N$, there exists $\Gamma\in \ma_{\mathcal{C}}(\bmu)$ that is the limit in $\p(\mathcal{C}([0,1],\R^d))$ of a subsequence of\/ $(\Gamma_n)_n$. Moreover for every $\Gamma$ obtained in this way the action $\A(\Gamma)$ is minimal, {\em i.e.}, such that Inequality \eqref{eq:a_superieur_a_e} is an equality.

Moreover, in dimension $d=1$, a Markov limit $\Gamma$ exists and if a limit $\Gamma$ is Markov, it is the Markov-quantile measure in $\ma_\mathcal{C}((\mu_t)_{t\in [0,1]})$.
\end{them}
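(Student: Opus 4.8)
\textbf{Proof plan for Theorem \ref{them:commun_d_et_1}.}

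The plan is to establish three things in turn: (1) tightness of the sequence $(\Gamma_n)_n$ in $\p(\mathcal{C}([0,1],\R^d))$, yielding a convergent subsequence; (2) that any such limit $\Gamma$ lies in $\ma_\mathcal{C}(\mu)$ and is action-minimizing; and (3) in dimension one, existence of a Markov limit and identification of any Markov limit with $\mq$.

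First I would prove tightness. For each $\Gamma_n\in\disp_{R_n}$ one has $\A(\Gamma_n)=\EE(\mu)$: indeed by Definition \ref{defi:disp} the trajectories interpolate linearly (hence by geodesics of $\R^d$) the optimal couplings $M^{r_i,r_{i+1}}$, so for a partition $S\supset R_n$ refining it one computes, exactly as in Remark \ref{rem:about_action}(c)--(d), that $\int_\mathcal{C}\EE(\gamma,S)\,\dd\Gamma_n(\gamma)=\sum W_2(\mu_{s_k},\mu_{s_{k+1}})^2/(s_{k+1}-s_k)$ because on each geodesic segment between two consecutive points of $R_n$ the energy is additive and attains the $W_2$-value; letting $|S|\to 0$ and using \eqref{eq:action_energy} gives $\A(\Gamma_n)=\lim_{|S|\to0}\EE(\mu,S)=\EE(\mu)$. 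With this uniform bound, and since $(\pr^0)_\#\Gamma_n=\mu_0$ is a fixed measure (so $\{|\gamma(0)|>\alpha\}$ has small mass uniformly), the Chebyshev argument from the proof of Proposition \ref{pro:ap_egale_emu} applies verbatim: the set $\mathcal{N}=\{\EE(\gamma)\leq\alpha\}\cap\{|\gamma(0)|\leq\alpha\}$ has $\Gamma_n$-mass $>1-2\eps$, sits in a ball of $\mathcal{W}^{1,2}([0,1],\R^d)$, which is compactly embedded in $\mathcal{C}$, so $(\Gamma_n)_n$ is tight and Prokhorov gives a convergent subsequence $\Gamma_{n_k}\to\Gamma$.

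Next, that $\Gamma\in\ma_\mathcal{C}(\mu)$: for each fixed $t$ the evaluation $\gamma\mapsto\gamma(t)$ is continuous on $\mathcal{C}$, so $(\pr^t)_\#\Gamma_{n_k}\to(\pr^t)_\#\Gamma$; and since $R_\infty$ is dense, for $t\in R_\infty$ eventually $(\pr^t)_\#\Gamma_n=\mu_t$, while for general $t$ one uses a sequence $t_j\in R_\infty$, $t_j\to t$, together with equicontinuity of the trajectories (uniform energy bound $\Rightarrow$ uniform Hölder-$\tfrac12$ bound off the small set) and continuity of $s\mapsto\mu_s$, to conclude $(\pr^t)_\#\Gamma=\mu_t$. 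That $\A(\Gamma)=\EE(\mu)$ then follows from lower semicontinuity of $\A$ on $\p(\mathcal{C})$ (the Claim in the proof of Proposition \ref{pro:ap_egale_emu} plus Proposition \ref{pro:relax}(c)): $\A(\Gamma)\leq\liminf_k\A(\Gamma_{n_k})=\EE(\mu)$, while $\A(\Gamma)\geq\EE(\mu)$ by Remark \ref{rem:about_action}(c).

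Finally the $d=1$ part. When $d=1$, Reminder \ref{reminder:wasserstein} and Remark \ref{rem:singleton} show $\disp_{R_n}=\{(\text{something})\}$ is a singleton whose couplings across consecutive points of $R_n$ are quantile couplings; comparing Definition \ref{defi:disp} with Definition \ref{defi:quantile_discretement_markovien}, and using that the linear interpolant of the quantile coupling $\como(\mu_{r_i},\mu_{r_{i+1}})$ has as its finite marginals precisely the concatenated quantile couplings (because quantile couplings of $\mu_s,\mu_t$ with $s,t$ between $r_i$ and $r_{i+1}$ are read off the same monotone rearrangement, being the displacement-interpolation marginals), one gets $\Gamma_n=\como_{[R_n]}(\mu)$ as measures on $\mathcal{C}$. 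Choosing $(R_n)_n$ as in Theorem B of \cite{boujui1} (nested, union dense), Proposition \ref{pro:ap_egale_emu} gives $\como_{[R_n]}\to\mq$ in $\p(\mathcal{C})$, so the Markov limit exists. For the converse, suppose some limit $\Gamma=\lim_k\Gamma_{n_k}$ is Markov. Each $\Gamma_n=\como_{[R_n]}$ has, between consecutive points of $R_n$, the quantile coupling, hence increasing kernels there; passing to the limit (the stochastic order and ``increasing kernel'' pass to weak limits of two-point marginals) $\Gamma$ has increasing kernels, and its two-point marginals $\Gamma^{s,t}$ for $s,t\in R_\infty$ are quantile couplings, which by density, continuity of $\mu$, and Markovianity forces $\Gamma^{s,t}=\como(\mu_s,\mu_t)$ for \emph{those} $s,t\in R_\infty$. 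Now $\Gamma$ is a Markov measure in $\ma_\mathcal{C}(\mu)$ with increasing kernels whose action is minimal; invoking the characterization of $\mq$ via Theorem A of \cite{boujui1} (and Lemma 2.19 of \cite{boujui1}), exactly as in the proof of Theorem \ref{them:action}\ref{pt:lag}, these properties single out $\mq$, so $\Gamma=\mq$.

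\textbf{Main obstacle.} The delicate point is the identification $\Gamma_n=\como_{[R_n]}(\mu)$ in dimension one, i.e.\ verifying that \emph{linear} interpolation of the quantile (= optimal) coupling between $r_i$ and $r_{i+1}$ produces exactly the quantile couplings $\como(\mu_s,\mu_t)$ for intermediate $s<t$, so that Definition \ref{defi:disp} collapses to Definition \ref{defi:quantile_discretement_markovien}; this is where one must use the McCann displacement-interpolation structure on $\R$ carefully. Secondarily, propagating ``increasing kernels'' and the quantile property to the weak limit, and then feeding the limit into the $\mathfrak{MQ}$-characterization of \cite{boujui1}, requires care that the hypotheses of that characterization (Markov + increasing kernels + minimality) are all genuinely verified by $\Gamma$.
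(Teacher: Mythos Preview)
Your tightness and lower-semicontinuity steps are fine in spirit, though note that $\A(\Gamma_n)=\EE(\mu,R_n)\le\EE(\mu)$, not equality: the intermediate marginals $(\pr^s)_\#\Gamma_n$ for $s\notin R_n$ are McCann interpolants of $\mu_{r_i},\mu_{r_{i+1}}$, not $\mu_s$, so your computation producing $\EE(\mu,S)$ for refinements $S\supset R_n$ is invalid. This does not harm tightness, but it is the symptom of the real error below.

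The genuine gap is your identification $\Gamma_n=\como_{[R_n]}(\mu)$ on $\mathcal{C}$. This is false: $\como_{[R_n]}\in\ma_\mathcal{C}(\mu)$ has marginal $\mu_s$ at \emph{every} $s$, whereas $\Gamma_n\in\disp_{R_n}$ has the displacement-interpolation marginal at $s\notin R_n$. The curve $\mu$ has finite energy but is not assumed to be a geodesic on each $[r_i,r_{i+1}]$, so these marginals differ in general; your ``main obstacle'' paragraph resolves this by an assertion that is simply not true. What \emph{is} true, and what the paper uses, is that $(\pr^S)_\#\Gamma_n=(\pr^S)_\#\como_{[R_n]}$ for every finite $S\subset R_n$. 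For the existence of a Markov limit one then chooses $(R_n)_n$ as in Theorem~B of \cite{boujui1}, passes to a subsequence along which both $\Gamma_n\to\Gamma$ and $\como_{[R_n]}\to\mq$ in $\p(\mathcal{C})$, and deduces $(\pr^S)_\#\Gamma=(\pr^S)_\#\mq$ for every finite $S\subset R_\infty$; density of $R_\infty$ and concentration on $\mathcal{C}$ then force $\Gamma=\mq$.

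Your uniqueness argument fails at the same point. For fixed $s<t$ in $R_\infty$, once $n$ is large there are many points of $R_n$ between $s$ and $t$, so $\Gamma_n^{s,t}$ is a \emph{product} of quantile couplings, not $\como(\mu_s,\mu_t)$; hence you cannot conclude $\Gamma^{s,t}=\como(\mu_s,\mu_t)$ (that would give $\Gamma=\como$, not $\mq$). The paper instead introduces $\Gamma'$ as a subsequential limit of $(\como_{[R_n]})_n$, shows $\Gamma'=\Gamma$ via the $(\pr^S)$-agreement on $R_\infty$, and then runs a two-sided comparison for the lower orthant order $\leqlc$: on one side, $\Gamma$ is Markov with increasing kernels (the latter passes to weak limits by Lemma~2.24 of \cite{boujui1}), so the minimality clause \ref{p1:them:a}\ref{item:minimal} of Theorem~A gives $\Gamma^{s,t}\geqlc\mq^{s,t}$; on the other side, for $s<t$ in $R_\infty$, $\Gamma^{s,t}$ is a limit of products of quantile couplings, and Proposition~4.16 of \cite{boujui1} defines $\mq^{s,t}$ as the $\leqlc$-supremum of such products, hence $\mq^{s,t}\geqlc\Gamma^{s,t}$. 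Equality on $R_\infty$ then yields $\Gamma=\mq$. Your appeal to ``the characterization of $\mq$ via Theorem~A'' does not close the argument: Theorem~A requires the minimality condition \ref{item:minimal}, which you have not verified, and the route via Theorem~B requires first identifying $\Gamma$ with a limit of $\como_{[R_n]}$, which needs precisely the $(\pr^S)$-agreement step you skipped.
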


\begin{proof}\label{proof:them:commun_d_et_1}
Adapting \cite[Chapter 7]{Vi2} (written in the spirit of \cite{BeBu}), \cite{Lis} or Proposition \ref{pro:ap_egale_emu} to our context we obtain the first part of the theorem for every $d\geq 1$. This requires slight modifications that we do not detail: Villani's chapter is in fact written for geodesic curves $(\mu_t)_t$ between prescribed $\mu_0$ and $\mu_1$ whereas Lisini's processes are attached to curves $(\mu_t)_{t\in [0,1]}$ of finite energy but the processes of the sequence are constant on each interval between two consecutive points of the partition, whereas ours is linear. Note, as an indication, that our measures $\Gamma_n$ minimize ${\cal A}$ in $\{\Gamma\in \p(\mathcal{C}([0,1],\R^d)):\,\forall r\in R_n,\,\Gamma^r=\mu_r\}$, the minimum being $\A(\Gamma_n)=\EE(\bmu,R_n)$.

In case $d=1$, take the nested sequence $(R_n)_n$ given by Theorem \ref{them:from}, then $\como_{[R_n]}$ converges to $\mq$ in $\p(\C)$ by Proposition \ref{pro:ap_egale_emu}. Up to taking a subsequence, the same sequence of partitions permits $\Gamma_n\in\disp_{R_n}$ to converge to some $\Gamma$. By Definitions \ref{defi:quantile_discretement_markovien} and \ref{defi:disp}, for every $S\subset R_n$ the measure $(\pr^S)_\#\Gamma_n$ coincides with $(\pr^S)_\#\como_{[R_n]}$ so that
\[
(\pr^S)_\#\Gamma=(\pr^S)_\#\mq.
\]
As $R_\infty$ is dense in $[0,1]$ and the measures are concentrated on $\mathcal{C}$ we have $\Gamma=\mq$. This proves the existence for $d=1$.

To establish uniqueness, take as before a nested sequence $(R_n)_n$ and let $\Gamma_n$ be the single element of $\disp_{R_n}$ (see Remark \ref{rem:singleton}). Assume that $(\Gamma_n)_n$ has a Markov limit $\Gamma$. By Definitions \ref{defi:quantile_discretement_markovien} and \ref{defi:disp}, for every $S\subset R_n$ the measure $(\pr^S)_\#\Gamma_n$ coincides with $(\pr^S)_\#\como_{[R_n]}$. Using the same argument as for Proposition \ref{pro:ap_egale_emu}, up to taking a subsequence, $(\como_{[R_n]})_n$ converges to an element of $\ma_\C(\bmu)$ that we denote by $\Gamma'$. Hence for every $S\subset R_\infty$, $
(\pr^S)_\#\Gamma=(\pr^S)_\#\Gamma'.$ As $R_\infty$ is dense in $[0,1]$ and $\Gamma$, $\Gamma'$ are concentrated on $\mathcal{C}$ we have $\Gamma'=\Gamma$. Therefore $\Gamma'$ is Markov. Uniqueness in Theorem \ref{them:from} implies $\Gamma'=\mq$. Thus $\mq$ is the unique possible Markov limit for $(\Gamma_n)_n$.
\end{proof}

\begin{rem}\label{rem:lisini}
Our work differs from Lisini's paper \cite{Lis} in several points. In Theorem \ref{them:commun_d_et_1}: (i) we restrict the range of application to $\mathcal X=\R^d$, (ii) our interpolations are continuous and piecewise linear instead of piecewise constant, (iii) we consider the uniform distance between the curves and the resulting weak convergence, instead of the weak topology on $L^p([0,1],{\mathcal X})$, (iv)  our partitions are adapted in order to ensure (in case $d=1$) the Markov property at the limit while the partitions in \cite{Lis} are dyadic.
\end{rem}

\section{Open questions: a Markov minimizer for the action in metric spaces}\label{sec:unique}

Let us finish by mentioning possible connection of our theorems with a stream of research whose latest developments are to be found in the so-called Brenier--Schrödinger problem (see for instance the works by Arnaudon \emph{et al.} \cite{ACLZ}, Benamou, Carlier and Nenna \cite{BeCaNe}, Baradat and L\'eonard \cite{BaLe}, and the references therein). In this modified problem the trajectories become diffusion trajectories with drift and the new setting comes together with a natural action functional for the quantification of large deviations. It corresponds to an entropic minimization problem over the flows (the name given there for $\Gamma$) with marginals prescribed at any times (in the basic problem, the same measure for every $t\in [0,1]$) and prescribed joint law between the terminal measures. As a referee pointed out to us the situation is even closer to the setting studied by Dawson and Gärtner \cite{Dawson1987} where, as in our situation, the last condition is not prescribed. Since the minimizer of the entropy is Markov (see \cite[Section 1.4 of Chapter II]{Foellmer1988}) it is tempting to figure out that some alternative approach could exist for constructing the Markov-quantile process. However, until now we failed to create this connection, one major obstruction being that the measures $\mu_t$ in the family $\bmu=(\mu_t)_{t\in [0,1]}$ apparently have to be diffuse, another related fundamental obstruction being the non stability of the Markov property for the family of processes attached to a one-parameter family of mollifiers $(\bmu^\eps)_{\eps>0}$.

We gather here the main questions arising in the paper.\maz

\point\ We proved that choosing the sequence $(R_n)_n$ properly, the approach introduced by Lisini to build Lagrangian representatives converges in dimension $d = 1$ towards a Markov process, so that there exists a Markov minimal Lagrangian representative. Is it still true in higher dimension? In geodesic spaces? Also, in dimension $d = 1$, we saw that there is only one possible Markov limit for this approach, namely the Markov-quantile process. Can also this be generalized?

\point\ Can $\mq$ or more general objects in Polish spaces be equivalently introduced through a large deviation approach inspired by the Schrödinger problem? See the paragraph just before.

\point\ Other questions are listed in \S5 of our first paper \cite{boujui1}. Is for instance $\mq$ a strongly Markov process? Example \ref{ex:MLr_nu} shows that the simple Markov property fails to uniquely determine $\mq$ {among minimal Lagrangian representatives. Can it be characterized by a more stringent stochastic property?

\section*{Aknowledgement}

We thank the referees, especially one of them for their in-depth work leading to this very amended version.

\bibliographystyle{imsart-number}
\bibliography{basebib_quantiles3}

\end{document}